\documentclass[11pt]{amsart}
\usepackage{geometry}                
\geometry{a4paper}                   
\usepackage{url}
\usepackage{graphicx}
\usepackage{amssymb}
\usepackage{epstopdf}
\usepackage{color}
\usepackage{amsmath}
\DeclareGraphicsRule{.tif}{png}{.png}{`convert #1 `dirname #1`/`basename #1 .tif`.png}

\usepackage{url}
\usepackage[square,numbers]{natbib}
\usepackage{subeqnarray}
\bibliographystyle{abbrvnat}

\definecolor{darkred}{rgb}{.6,0,0}
\definecolor{darkblue}{rgb}{0,0,.7}

\newtheorem{lemma}{Lemma}[section]

\newtheorem{remark}{Remark}[section]

\title[Particle-based optimal control]{Particle-based algorithm for stochastic optimal control}
\author{Sebastian Reich}
\address{Institut f\"ur Mathematik, Universit\"at Potsdam, Karl-Liebknecht-Str. 24/25, 14476 Potsdam}
\date{\today}                                           

\begin{document}
\maketitle

\begin{abstract} The solution to a stochastic optimal control problem can be determined by computing the value
function from a discretization of the associated Hamilton--Jacobi--Bellman equation. Alternatively, the problem can be reformulated in terms of a 
pair of forward-backward SDEs, which makes Monte--Carlo techniques applicable. More recently, the problem has also been 
viewed from the perspective of forward and reverse time SDEs and their associated Fokker--Planck equations. This approach is closely 
related to techniques used in diffusion-based generative models. Forward and reverse time formulations express the value function as the ratio of two probability
density functions; one stemming from a forward McKean--Vlasov SDE and another one from a reverse McKean--Vlasov SDE. In this paper, we extend this approach
to a more general class of stochastic optimal control problems and combine it with ensemble Kalman filter type 
and diffusion map approximation techniques in order to obtain efficient and robust particle-based algorithms. 
\end{abstract}

%
\section{Introduction}
%

We consider controlled nonlinear diffusion processes of the form
\begin{equation} \label{eq:FSDE}
{\rm d}X_t = b(X_t){\rm d}t + G(X_t) u_t {\rm d}t + \sigma (X_t) {\rm d}B_t, \qquad X_0 = x_0.
\end{equation}
Here $X_t \in \mathbb{R}^{d_x}$ denotes the random state variable at time $t\ge 0$ and $B_t \in \mathbb{R}^{d_b}$ 
standard $d_b$-dimensional Brownian motion. Furthermore, the functions $b(x) \in \mathbb{R}^{d_x}$, $G(x) \in \mathbb{R}^{d_x \times d_u}$, and 
$\sigma (x) \in \mathbb{R}^{d_x \times d_b}$  are all assumed to be given. 

The cost to optimize via an appropriate choice of the time-dependent control $u_{0:T} = \{u_t\}_{t \in [0,T]}$ is given by
\begin{equation} \label{eq:costJ}
J_T(x_0,u_{0:T}) = \mathbb{E} \left[ \int_0^T \left(c(X_t) + \frac{1}{2} u_t^{\rm T} R^{-1} u_t \right) {\rm d}t + f(X_T) \right].
\end{equation}
Expectation is taken with regard to the path measure generated by the stochastic differential equation (SDE) (\ref{eq:FSDE}) conditioned on
the initial $X_0 = x_0$ and a given control law $u_{0:T}$, which we assume to be state-dependent. Here 
\begin{equation} \label{eq:running cost}
c(x) = \frac{1}{2} h(x)^{\rm T} S^{-1} h(x)
\end{equation}
denotes the running cost and 
\begin{equation} \label{eq:terminal cost}
f(x) = \frac{1}{2} \xi(x)^{\rm T} V^{-1} \xi(x)
\end{equation}
the terminal cost. The matrices $R \in \mathbb{R}^{d_u\times d_u}$, $S \in \mathbb{R}^{d_h \times d_h}$, $V \in 
\mathbb{R}^{d_\xi \times d_\xi}$ as well as the functions $h(x) \in \mathbb{R}^{d_h}$, $\xi(x) \in \mathbb{R}^{d_\xi}$ are again all
assumed to be given. See \cite{Pavliotis2016} for an introduction to diffusion processes and \cite{Carmona,Meyn} for an introduction to
stochastic control.

In this paper, we aim at finding control laws of the form
\begin{equation} \label{eq:affine control}
u_t (x)= R G(x)^{\rm T} (A_t x + c_t),
\end{equation}
which provide good approximations to the optimal feedback control law denoted here by $u_t^\ast(x)$. While the optimal control law can be found via
the associated Hamilton--Jacobi--Bellman (HJB) equation \cite{Carmona}; solving such a PDE is computationally demanding \cite{EHJ21}. Popular
alternatives include those based on forward-backward SDEs \cite{CKSY21} in combination with machine learning techniques \cite{EHJ17,EHJ21}. 
Here we follow the work of \cite{MO22} instead, which in turn has been
inspired by \cite{A82,reich2019data,MRO20}, to reformulate the problem in terms of two McKean--Vlasov SDEs over state space $\mathbb{R}^{d_x}$. 
Those SDEs need to be solved in forward and reverse time, respectively, only once and are related to generative models using diffusion processes
\cite{sohl2015deep,song2021scorebased}. Furthermore, in order to obtain robust and easy to compute approximations, we employ the ensemble 
Kalman filter (EnKF) methodology to approximate the arising McKean--Vlasov interaction terms \cite{Evensenetal2022,CRS22}. A related EnKF-based approach
to optimal control has been considered in \cite{JTMM22} and is based on a direct approximation of the HJB equation via a McKean--Vlasov SDE.  
We use stabilization of an inverted pendulum position \cite{Meyn} to demonstrate the efficiency of our method. While the numerical experiments in 
\cite{MO22} and \cite{JTMM22} utilize ensemble sizes on the order of $10^3$, our method has been implemented with an ensemble of size $M = d_x +1 = 3$. 
We also propose a more general methodology which combines the EnKF methodology with diffusion map approximations for the arising grad-log density terms 
\cite{diffusion_map1,diffusion_map2}. This extension is useful whenever the underlying densities cannot be approximated well by Gaussian distributions. We will illustrate this aspect through a controlled nonlinear Langevin dynamics process.

The remainder of this paper is structured as follows. The mathematical background on the HJB equation for stochastic
optimal control problems is summarized in Section \ref{sec:math}. We demonstrate in Section \ref{sec:mean field} how the value function
defined by the HJB equation can be expressed as the ratio of two probability density functions. Here we extend previous work \cite{MO22} to the
wider class of optimal control problems defined by (\ref{eq:FSDE}) and (\ref{eq:costJ}). Both the associated forward and reverse time
evolution equations can be expressed in terms of McKean--Vlasov SDEs in the state variable, $x$. Before discussing numerical approximations of those
McKean--Vlasov equations in Section \ref{sec:numerical implementation}, we demonstrate in 
Section \ref{sec:generative} how our formulation is related to generative modeling using diffusion processes \cite{sohl2015deep,song2021scorebased}.
We first discuss numerical approximations using EnKF-type methodologies \cite{Evensenetal2022,CRS22} in Section \ref{sec:EnKF}. Next
we also employ diffusion maps \cite{diffusion_map1,diffusion_map2} in order to approximate grad-log density terms in Subsection \ref{sec:DM_EnKF}.
Numerical implementation details are discussed in Subsection \ref{sec:implementation}, while numerical results 
for an inverted pendulum and nonlinear Langevin dynamics are presented in Section \ref{sec:example}.
Possible extension of the proposed methodology to infinite horizon optima control problems are discussed in Appendix \ref{sec:Appendix}.

%
\section{Mathematical problem formulation} \label{sec:math}
%

In this section, we recap the essential aspects of finding the optima control law, $u_t^\ast(x)$,
for controlled SDE (\ref{eq:FSDE}) with cost (\ref{eq:costJ}). See \cite{Carmona,Meyn} for more 
detailed expositions.

The law, $\pi_t$, of the diffusion process, $X_t$, defined by the SDE (\ref{eq:FSDE}) satisfies the Fokker--Planck equation
\begin{subequations} \label{eq:FPE}
\begin{align}
\partial_t \pi_t &= -\nabla_x \cdot \left( \pi_t \left( b + Gu_t \right) \right)+  \frac{1}{2} \nabla_x \cdot  \left(\pi_t \Sigma
\right) \\
&= -\nabla_x \cdot \left( \pi_t \left( b + Gu_t- \frac{1}{2} \nabla_x \cdot \Sigma - \frac{1}{2} \Sigma \nabla_x 
\log \pi_t \right) \right),
\end{align}
\end{subequations}
where 
\begin{equation}
\Sigma(x) = \sigma(x) \sigma(x)^{\rm T}. 
\end{equation}
We also introduce the weighted norm $\| \cdot \|_R$ via
\begin{equation}
\|u\|^2_R = u^{\rm T} R^{-1} u.
\end{equation}

Let the function $y_t(x)$ satisfy the backward HJB equation
\begin{equation} \label{eq:HJB}
-\partial_t y_t = (b + Gu_t) \cdot \nabla_x y_t + \frac{1}{2} \Sigma :  D^2_x y_t + c + \frac{1}{2} \|u_t\|^2_R
\end{equation}
with terminal condition $y_T = f$. Here $D_x^2 y_t(x) \in \mathbb{R}^{d_x \times d_x}$ denotes the Hessian of $y_t(x)$ and $A : B = \mbox{tr} \,(
A B^{\rm T})$ the Frobenius inner product of two $d_x\times d_x$ matrices $A$ and $B$. The optimal feedback control is provided by
\begin{equation} \label{eq:optimal control law}
u^\ast_t(x) = - R G(x)^{\rm T} \nabla_x y_t^\ast(x),
\end{equation}
where the optimal value function $y_t^\ast(x)$ satisfies the HJB equation
\begin{subequations}
\begin{align} 
-\partial_t y_t^\ast &= b \cdot \nabla_x y_t^\ast + \frac{1}{2} \Sigma : D^2_x y_t^\ast + c + \min_u \left( 
Gu \cdot \nabla_x y_t^\ast + \frac{1}{2} \|u\|^2_R \right)\\
&= b \cdot \nabla_x y_t^\ast + \frac{1}{2} \Sigma : D^2_x y_t^\ast + c - \frac{1}{2} \| R G^{\rm T} \nabla_x y_t^\ast \|^2_R.
\end{align}
\end{subequations}

We apply the Cole--Hopf transformation and introduce the  new function
\begin{equation}
v_t^\ast (x) := \exp(-y_t^\ast(x))
\end{equation}
in order to obtain the transformed HJB equation
\begin{subequations} \label{eq:HJB transformed}
\begin{align}
-\partial_t v_t^\ast &= b \cdot \nabla_x  v_t^\ast + \frac{1}{2} \Sigma : D_x^2  v_t^\ast  \\
& \qquad -\,
\left(c + \frac{1}{2} \|\sigma^{\rm T} \nabla_x \log v_t^\ast ||^2  
+ \min_u \left( 
\frac{1}{2} \|u\|^2_R  - Gu \cdot \nabla_x \log v_t^\ast \right) \right) v_t^\ast \\
&= b \cdot \nabla_x  v_t^\ast + \frac{1}{2} \Sigma : D_x^2  v_t^\ast  \\
& \qquad -\,
\left(c +  \frac{1}{2} \|\sigma^{\rm T} \nabla_x \log v_t^\ast ||^2  -  
\frac{1}{2} \|R G^{\rm T} \nabla_x \log v_t^\ast \|^2_R   \right) v_t^\ast .
\end{align}
\end{subequations}
The terminal condition is $v_T^\ast = \exp(-f)$. Here we have used
\begin{equation}
\partial_t y_t^\ast = \frac{-1}{v_t^\ast} \partial_t v_t^\ast, \quad
\nabla_x y_t^\ast = \frac{-1}{v_t^\ast} \nabla_x v_t^\ast, \quad \Sigma : D_x^2 y_t^\ast = \frac{-1}{v_t^\ast}
\Sigma : D_x^2 v_t^\ast + 
\|\sigma^{\rm T} \nabla_x \log v_t^\ast\|^2.
\end{equation}
The optimal control is now characterized by 
\begin{equation}
u_t^\ast(x) = RG(x)^{\rm T} \nabla_x \log v_t^\ast(x).
\end{equation}

\begin{remark}
The transformed HJB equation (\ref{eq:HJB transformed}) simplifies to
\begin{equation} 
-\partial_t v_t^\ast = b \cdot \nabla_x v_t^\ast + \frac{1}{2}  \Sigma : D_x^2 v_t^\ast  - c v_t^\ast 
\end{equation}
for the special case $G = \sigma$ and $R = I$ and 
(\ref{eq:HJB transformed}) becomes linear in $v_t^\ast$. This well-known fact has been exploited in the numerical work of \cite{MO22}. Furthermore, the transformed HJB equation arising from the further simplification $c(x)\equiv 0$  leads to the standard backward Kolmogorov equation \cite{Pavliotis2016}.
\end{remark}

%
\section{McKean--Vlasov formulation} \label{sec:mean field}
%

In this section, we extend the McKean--Vlasov forward-reverse time approach to optimal control from \cite{MO22} to more general control
problems defined by (\ref{eq:FSDE}) and (\ref{eq:costJ}). The first step is to choose an appropriate, potentially time-dependent convex cost function $\alpha_t (x)$ and to formulate a suitable evolution equation for the density
\begin{equation} \label{eq:product}
\tilde \pi_t := Z_t^{-1} v_t^\ast \bar \pi_t,
\end{equation}
where $v_t^\ast$ satisfies (\ref{eq:HJB transformed}) and $\bar \pi_t$ the forward evolution equation
\begin{equation} \label{eq:Liouville_forward}
\partial_t \bar \pi_t = - \nabla_x \cdot \left( \bar \pi_t \bar b_t \right)
 - \bar \pi_t( \alpha_t - \bar \pi_t[\alpha_t])
\end{equation}
with initial distribution $\bar \pi_0 = \delta_{x_0}$ and modified drift function
\begin{equation}
\bar b_t(x) := b(x) -\frac{1}{2}\left( \nabla_x \cdot \Sigma (x) +\Sigma(x) \nabla_x \log \bar \pi_t(x) \right).
\end{equation}
The normalization constant $Z_t$ is given by
\begin{equation}
Z_t = \bar \pi_t[v_t^\ast]
\end{equation}
and $\delta_{x_0}$ denotes the Dirac delta function centered at $x_0$. We note that
\begin{equation}
-\nabla_x \cdot \left( \bar \pi_t \bar b_t \right) = -\nabla_x \cdot (\bar \pi_t b) + \frac{1}{2} \nabla_x \cdot 
(\nabla_x \cdot ( \bar \pi_t \Sigma)).
\end{equation}

We next need to find an evolution equations for the probability density $\tilde \pi_t$ defined by (\ref{eq:product}).
Relying on
\begin{equation}
\nabla_x \log \tilde \pi_t (x) = \nabla_x \log v_t^\ast + \nabla_x \log \bar \pi_t,
\end{equation}
we introduce the modified drift
\begin{subequations}  \label{eq:tilde b}
\begin{align}
\tilde b_t(x) &:= -\bar b_t(x) - \frac{1}{2} \Sigma(x) \nabla_x \log v_t^\ast(x)\\
&= -\bar b_t(x) - \frac{1}{2} \Sigma (x) \left( \nabla_x \log \tilde \pi_t(x) + \nabla_x \log \bar \pi_t (x)\right) \\
&= -b(x) +  \nabla_x \cdot \Sigma(x) + \Sigma(x) \nabla_x \log \bar \pi_t(x) \\
&\qquad - \,\frac{1}{2} \left(\nabla_x \cdot \Sigma(x) + \Sigma(x) \nabla_x \cdot \log \tilde \pi_t (x) \right).
\end{align}
\end{subequations}
We note that
\begin{equation}
-\nabla_x \cdot (\tilde \pi_t \tilde b_t) 
= \nabla_x \cdot \left(\tilde \pi_t \left(b - \nabla_x \cdot \Sigma -\Sigma \nabla_x \log \bar \pi_t  \right) 
\right)  + \frac{1}{2}
\nabla_x \cdot (\nabla_x \cdot (\tilde \pi_t \Sigma)),
\end{equation}
which leads naturally to the interpretation in terms of a reverse time SDE with McKean--Vlasov-type drift function.
Before investigating this aspect in more detail, we state the following lemma, which links the modified
drift $\tilde b_t$ with the time evolution of the probability density $\tilde \pi_t$ defined by (\ref{eq:product}).

\begin{lemma} \label{lemma1}
Given the forward evolution equation (\ref{eq:Liouville_forward}) and the HJB equation
(\ref{eq:HJB transformed}), the probability density defined by (\ref{eq:product}) satisfies the reverse time evolution
equation
\begin{equation} \label{eq:Liouville_backward}
-\partial_t \tilde \pi_t = -\nabla_x \cdot \left(\tilde \pi_t \tilde b_t \right)
-  \tilde \pi_t \left(c - \alpha_t +  \frac{1}{2}\|\sigma^{\rm T} 
\nabla_x \log v_t^\ast \|^2 - \frac{1}{2} \|R G^{\rm T} \nabla_x \log v^\ast_t\|^2_R - \zeta_t\right)
\end{equation}
with terminal condition $\tilde \pi_T = Z_T^{-1}\exp(-f) \bar \pi_T$, where $\zeta_t$ is an appropriate normalization constant. 
\end{lemma}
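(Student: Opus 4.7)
My plan is to differentiate the factored form $\tilde \pi_t = Z_t^{-1} v_t^\ast \bar \pi_t$ in time and read off the claimed right-hand side of (\ref{eq:Liouville_backward}) piece by piece. Taking the logarithmic derivative gives
\begin{equation*}
-\frac{\partial_t \tilde \pi_t}{\tilde \pi_t} = \frac{\dot Z_t}{Z_t} - \frac{\partial_t v_t^\ast}{v_t^\ast} - \frac{\partial_t \bar \pi_t}{\bar \pi_t},
\end{equation*}
into which I would substitute (\ref{eq:HJB transformed}) for $\partial_t v_t^\ast$ and (\ref{eq:Liouville_forward}) for $\partial_t \bar \pi_t$. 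The running-cost contributions cancel automatically: the HJB substitution produces $-c$ and the forward Liouville substitution produces $+c$. What survives are three clean pieces: the score-quadratic terms $-\tfrac12\|\sigma^{\rm T}\nabla_x\log v_t^\ast\|^2 + \tfrac12\|RG^{\rm T}\nabla_x\log v_t^\ast\|^2_R$ multiplying $\tilde \pi_t$, which already match the quadratic terms on the right-hand side of (\ref{eq:Liouville_backward}); the spatially constant residue $\dot Z_t/Z_t - \bar \pi_t[c]$, which I would collect into $\zeta_t := \dot Z_t/Z_t - \bar \pi_t[c]$; and the spatial-derivative terms $\tilde \pi_t\big[b\cdot\nabla_x\log v_t^\ast + (2v_t^\ast)^{-1}\Sigma:D_x^2 v_t^\ast + \bar \pi_t^{-1}\nabla_x\cdot(\bar \pi_t\bar b_t)\big]$.

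The remaining algebraic task is then to show that this last bracket equals $\nabla_x\cdot(\tilde \pi_t \tilde b_t)$. Writing $\tilde \pi_t \tilde b_t = Z_t^{-1} v_t^\ast \bar \pi_t \bar b_t + \tfrac12 Z_t^{-1}\bar \pi_t\Sigma\nabla_x v_t^\ast$ by means of $\tilde b_t = \bar b_t + \tfrac12 \Sigma\nabla_x\log v_t^\ast$, one application of the product rule yields three terms. The first matches the $\bar \pi_t^{-1}\nabla_x\cdot(\bar \pi_t\bar b_t)$ contribution directly. For the other two I would expand $\bar b_t = b - \tfrac12\nabla_x\cdot\Sigma - \tfrac12\Sigma\nabla_x\log\bar \pi_t$ and unfold $\nabla_x\cdot(\bar \pi_t \Sigma\nabla_x v_t^\ast)$ via Leibniz, using the symmetry of $\Sigma$ to identify $(\Sigma\nabla_x v_t^\ast)\cdot\nabla_x\bar \pi_t$ with $\nabla_x v_t^\ast\cdot\Sigma\nabla_x\bar \pi_t$. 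The cross contributions involving $(\nabla_x\cdot\Sigma)\cdot\nabla_x v_t^\ast$ and $(\Sigma\nabla_x\bar \pi_t)\cdot\nabla_x v_t^\ast$ then cancel pairwise, leaving exactly $Z_t^{-1}\bar \pi_t\, b\cdot\nabla_x v_t^\ast + \tfrac12 Z_t^{-1}\bar \pi_t\,\Sigma:D_x^2 v_t^\ast$, as required. The matrix-divergence identity already displayed before the lemma statement is essentially this same computation packaged differently, so I would either cite it directly or reproduce its short proof.

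The terminal condition $\tilde \pi_T = Z_T^{-1}\exp(-f)\bar \pi_T$ is immediate from (\ref{eq:product}) combined with $v_T^\ast = \exp(-f)$. For consistency one may check a posteriori that the chosen $\zeta_t$ is indeed the right normalisation: integrating (\ref{eq:Liouville_backward}) over $\mathbb{R}^{d_x}$ under standard decay hypotheses annihilates the divergence term and forces $\zeta_t = \tilde \pi_t\big[\tfrac12\|\sigma^{\rm T}\nabla_x\log v_t^\ast\|^2 - \tfrac12\|RG^{\rm T}\nabla_x\log v_t^\ast\|^2_R\big]$, which agrees with $\dot Z_t/Z_t - \bar \pi_t[c]$ after differentiating $Z_t = \bar \pi_t[v_t^\ast]$ in time, substituting the two evolution equations once more, and an integration by parts. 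I expect the main obstacle to be the bookkeeping in the second paragraph: no genuinely new idea is involved, but one has to be disciplined with matrix-divergence conventions and with the symmetry of $\Sigma$ when shuffling derivatives between $\bar \pi_t$, $\Sigma$ and $\nabla_x v_t^\ast$.
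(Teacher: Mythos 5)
Your proposal is correct and follows essentially the same route as the paper: differentiate the product $\tilde\pi_t = Z_t^{-1}v_t^\ast\bar\pi_t$, substitute the forward Liouville and transformed HJB equations, and reduce the matching of the spatial terms to the identity $\nabla_x\cdot(\tilde\pi_t\tilde b_t) = \frac{v_t^\ast}{Z_t}\nabla_x\cdot(\bar\pi_t\bar b_t) + \frac{\bar\pi_t}{Z_t}\bigl(b\cdot\nabla_x v_t^\ast + \tfrac12\Sigma:D_x^2 v_t^\ast\bigr)$, which is exactly the computation the paper carries out. Your version is somewhat more explicit than the paper's (you identify $\zeta_t = \dot Z_t/Z_t - \bar\pi_t[c]$ and note the cancellation of the running-cost terms, which the paper leaves implicit), but it is the same argument.
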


\begin{proof}
Since (\ref{eq:tilde b}) and 
\begin{equation}
\nabla_x \cdot (\Sigma \nabla_x v_t^\ast) = \nabla_x v_t^\ast \cdot (\nabla_x \cdot \Sigma) + \Sigma : D_x^2 v_t^\ast,
\end{equation}
and assuming that (\ref{eq:product}) holds, 
it follows that
\begin{subequations}
\begin{align}
-\nabla_x \cdot (\tilde \pi_t \tilde b_t) 
&= \nabla_x \cdot \left(\tilde \pi_t \left( \bar b_t + \frac{1}{2} \Sigma \nabla_x \log v_t^\ast \right)\right)\\
&= \frac{v_t^\ast}{Z_t} \nabla_x  \cdot (\bar \pi_t \bar b_t ) + \frac{\bar \pi_t}{Z_t} (\Sigma \nabla_x v_t^\ast) \cdot \bar b_t
+ \frac{1}{2Z_t} \nabla_x \cdot (\bar \pi_t \Sigma \nabla_x v_t^\ast)\\
&= \frac{v_t^\ast}{Z_t} \nabla_x  \cdot (\bar \pi_t \bar b_t ) + \frac{\bar \pi_t}{Z_t} \left( \nabla_x v_t^\ast \cdot \left(
 b -\frac{1}{2} \nabla_x \cdot \Sigma   \right)
+  \frac{1}{2}  \nabla_x \cdot ( \Sigma \nabla_x v_t^\ast) \right)\\
&= \frac{v_t^\ast}{Z_t}  \nabla_x \cdot \left( \bar \pi_t \bar b_t \right)
+ \frac{\bar \pi_t}{Z_t} \left( b \cdot \nabla_x  v_t^\ast + \frac{1}{2} \Sigma : D_x^2  v_t^\ast \right).
\end{align}
\end{subequations}
Hence it holds indeed that
\begin{equation}
\partial_t \tilde \pi_t = \frac{v_t^\ast}{Z_t} \partial_t \bar \pi_t + \frac{\bar \pi_t}{Z_t} \partial_t v_t^\ast - \frac{\tilde \pi}{Z_t} \frac{{\rm d}Z_t}{{\rm d}t}
\end{equation}
for the partial time derivatives given by (\ref{eq:Liouville_backward}), (\ref{eq:Liouville_forward}), and (\ref{eq:HJB transformed}), respectively.
Furthermore, $\tilde \pi_T = v_T^\ast \bar \pi_T/Z_T$ at final time and, hence, (\ref{eq:product}) holds for all times $t\in (0,T]$.
\end{proof}

\noindent
Lemma \ref{lemma1} implies that we can solve the forward evolution equation (\ref{eq:Liouville_forward}) together with the backward evolution equation
(\ref{eq:Liouville_backward}) instead of the HJB equation (\ref{eq:HJB transformed}). Throughout the remainder of this paper, we use
\begin{equation}
\alpha_t(x) = c(x)
\end{equation}
in line with the previous work \cite{MO22}. However, other choices could be explored. See, for example, \cite{reich24}, which allows one to incorporate the terminal cost $f(x)$.

\begin{remark}
While \cite{MRO20} and \cite{MO22} form the basis for our work, we mention the alternative approach put forward in \cite{JTMM22},
where $v_t^\ast$ is viewed directly as an unnormalized probability density. This approach leads to the interpretation
of the HJB equation (\ref{eq:HJB transformed}) in terms of a nonlinear Fokker--Planck equation which in turn can be
approximated using interacting particles and EnKF-type approximations as in our work. Only deterministic
control problems are considered in \cite{JTMM22}. Furthermore, it is not obvious whether the value function 
$v_t^\ast$ is always normalizable with respect to the Lebesque measure on $\mathbb{R}^{d_x}$.
\end{remark}

\noindent
The final step is to turn (\ref{eq:Liouville_forward}) and (\ref{eq:Liouville_backward}), respectively, into forward and reverse McKean--Vlasov SDEs
\cite{MRO20}:
\begin{subequations} \label{eq:FB mean field}
\begin{align} 
{\rm d}\bar X_t &= \bar f_t^\epsilon(\bar X_t){\rm d}t + \sqrt{\epsilon}\sigma (\bar X_t) {\rm d}B_t^+,  \qquad \bar X_0 = x,\\
-{\rm d}\tilde X_t &= \tilde f_t^\epsilon(\tilde X_t){\rm d}t + \sqrt{\epsilon}\sigma (\tilde X_t){\rm d}B_t^-,  \qquad \tilde X_T \sim \tilde \pi_T.
\end{align}
\end{subequations}
Here $B_t^+$ denotes Brownian motion adapted to forward time, $B_t^-$ Brownian motion adapted to reverse time, and
$\epsilon \in (0,1]$ is a free parameter determining the noise level added to the McKean--Vlasov equations.

The drift functions are defined as follows:
\begin{equation}
\bar f_t^\epsilon (x) := b(x) - \bar g_t(x) -\frac{1-\epsilon}{2}\left( \nabla_x \cdot \Sigma (x) +\Sigma(x) \nabla_x \log \bar \pi_t(x) \right)
\end{equation}
with $\bar g_t(x)$ satisfying
\begin{equation} \label{eq:Poisson forward}
\nabla_x \cdot (\bar \pi_t \bar g_t) = -\bar \pi_t (c - \bar \pi_t[c]),
\end{equation}
and
\begin{subequations} \label{eq:tilde f}
\begin{align}
\tilde f_t^\epsilon(x) &:= 
 -b(x) + \nabla_x \cdot \Sigma(x) + \Sigma(x) \nabla_x \log \bar \pi_t(x) - \tilde g_t(x) \\
 &\qquad -\, \frac{1-\epsilon}{2} \left(
\nabla_x \cdot \Sigma(x) + \Sigma(x) \nabla_x  \log \tilde \pi_t (x) \right)
\end{align}
\end{subequations}
with $\tilde g_t(x)$ satisfying
\begin{equation} \label{eq:Poisson backward}
\nabla_x \cdot (\tilde \pi_t \tilde g_t) = -\tilde \pi_t \left(  \frac{1}{2}\|\sigma^{\rm T} 
\nabla_x \log v_t^\ast \|^2 - \frac{1}{2} \|R G^{\rm T} \nabla_x \log v^\ast_t\|^2_R - \zeta_t\right).
\end{equation}

\begin{lemma} The two diffusion processes defined by 
(\ref{eq:FB mean field}) satisfy $\bar X_t \sim \bar \pi_t$ and $\tilde X_t \sim \tilde \pi_t$, respectively. 
Given $\tilde \pi_t$ and $\bar \pi_t$, the optimal control law $u_t^\ast(x)$ is provided by
\begin{equation}
u_t^\ast(x) = R G(x)^{\rm T} \left( \nabla_x \log \tilde \pi_t(x) - \nabla_x \log \bar \pi_t(x) \right).
\end{equation}
\end{lemma}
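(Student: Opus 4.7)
The plan is to prove the two density claims by matching Fokker--Planck equations, and then obtain the optimal control formula as an immediate corollary. For the control law, note that (\ref{eq:product}) gives $v_t^\ast = Z_t\,\tilde\pi_t/\bar\pi_t$, so $\nabla_x\log v_t^\ast = \nabla_x\log\tilde\pi_t - \nabla_x\log\bar\pi_t$ (since $Z_t$ is $x$-independent), and substituting this into the HJB-derived feedback $u_t^\ast(x) = RG(x)^{\rm T}\nabla_x\log v_t^\ast(x)$ produces the claimed expression. The substance therefore lies in showing $\bar X_t \sim \bar\pi_t$ and $\tilde X_t \sim \tilde\pi_t$.

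For the forward process, I would write down the It\^o Fokker--Planck equation associated with the SDE for $\bar X_t$, namely $\partial_t\rho_t = -\nabla_x\cdot(\rho_t\bar f_t^\epsilon) + \frac{\epsilon}{2}\nabla_x\cdot\nabla_x\cdot(\rho_t\Sigma)$, substitute the ansatz $\rho_t = \bar\pi_t$, and verify it reduces to (\ref{eq:Liouville_forward}). Two identities do all the work: the rewriting $\frac{1}{2}\nabla_x\cdot\nabla_x\cdot(\bar\pi_t\Sigma) = \frac{1}{2}\nabla_x\cdot(\bar\pi_t(\nabla_x\cdot\Sigma + \Sigma\nabla_x\log\bar\pi_t))$, which combines with the $(1-\epsilon)/2$-correction inside $\bar f_t^\epsilon$ to produce exactly $\nabla_x\cdot(\bar\pi_t\bar b_t)$ (the $\epsilon$-dependence cancels), and the Poisson relation (\ref{eq:Poisson forward}), which converts the reaction $-\bar\pi_t(c - \bar\pi_t[c])$ into the divergence $-\nabla_x\cdot(\bar\pi_t\bar g_t)$ absorbed into the drift. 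Together with the initial condition $\bar X_0 = x_0 \sim \delta_{x_0} = \bar\pi_0$ and uniqueness of the linear evolution, this yields $\bar X_t \sim \bar\pi_t$ for all $t \in [0,T]$.

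The reverse-time statement follows the same template after a change of variables. Setting $\tau := T - t$ and $Y_\tau := \tilde X_{T-\tau}$ turns the reverse-time SDE in (\ref{eq:FB mean field}) into a standard forward It\^o SDE for $Y_\tau$, and (\ref{eq:Liouville_backward}) into a forward PDE for $\mu_\tau(x) := \tilde\pi_{T-\tau}(x)$. I would then apply the same two identities: the divergence identity converts the $\frac{\epsilon}{2}\nabla_x\cdot\nabla_x\cdot(\Sigma\mu_\tau)$ term into a drift contribution which, combined with the $(1-\epsilon)/2$-correction in $\tilde f_t^\epsilon$ and the sign flip coming from the time reversal, reproduces $\nabla_x\cdot(\mu_\tau\tilde b_{T-\tau})$; the Poisson equation (\ref{eq:Poisson backward}) then rewrites the $v_t^\ast$-dependent reaction on the right-hand side of (\ref{eq:Liouville_backward}) as a divergence absorbed into the drift via $\tilde g_t$. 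The terminal condition $\tilde X_T \sim \tilde\pi_T$ thus propagates to all $t \in [0,T]$.

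The main obstacle is sign bookkeeping in the reverse-time step. The passage from the backward equation (\ref{eq:Liouville_backward}) to a forward Fokker--Planck PDE for $\mu_\tau$ flips the sign of the drift term, and the $\frac{\epsilon}{2}(\nabla_x\cdot\Sigma + \Sigma\nabla_x\log\tilde\pi_t)$ contribution arising from the diffusion must then be reconciled with the $-\Sigma\nabla_x\log\bar\pi_t$ piece inherited from $\bar b_t$ through Lemma \ref{lemma1}; a single sign slip here produces a spurious $\epsilon$-dependence in $\tilde f_t^\epsilon$ or the wrong coefficient on the score term. Beyond this bookkeeping, everything reduces to the product rule and the factorisation $\tilde\pi_t = Z_t^{-1}v_t^\ast\bar\pi_t$ already established in Lemma \ref{lemma1}.
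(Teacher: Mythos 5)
Your proposal is correct and follows essentially the same route as the paper, whose proof consists precisely of the two steps you carry out: writing down the (nonlinear) Fokker--Planck equations for the forward and reverse McKean--Vlasov SDEs and checking, via the divergence identity $\nabla_x\cdot(\nabla_x\cdot(\pi\Sigma)) = \nabla_x\cdot(\pi(\nabla_x\cdot\Sigma + \Sigma\nabla_x\log\pi))$ together with the Poisson equations for $\bar g_t$ and $\tilde g_t$, that the $\epsilon$-dependence cancels and the equations reduce to (\ref{eq:Liouville_forward}) and (\ref{eq:Liouville_backward}); and then reading off the control law from the factorisation (\ref{eq:product}). You simply supply the details the paper leaves implicit, including the sign bookkeeping in the time-reversal step that you rightly identify as the only delicate point.
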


\begin{proof}
The lemma follows immediately from writing down the associated (nonlinear) Fokker--Planck equations
for the two McKean--Vlasov SDEs (\ref{eq:FB mean field}). The stated formula for the optimal control follows
from (\ref{eq:product}). 
\end{proof}

\noindent
The choice $\epsilon = 0$ leads to fully deterministic evolution equations and the following intriguing representation of (\ref{eq:FB mean field}):
\begin{subequations} \label{eq:determinstic McKean Vlasov}
\begin{align}
\frac{{\rm d}\bar X_t}{{\rm d}t} &=
b(\bar X_t) - \bar g_t(\bar X_t) 
-\frac{1}{2}\left( \nabla_x \cdot \Sigma (\bar X_t) +\Sigma(\bar X_t) \nabla_x \log \bar \pi_t(\bar X_t) \right)\\
-\frac{{\rm d}\tilde X_t}{{\rm d}t}&=
 - \tilde g_t(\tilde X_t) - \bar g_t(\tilde X_t)
 -  \frac{{\rm d}\bar X_t}{{\rm d}t}(\tilde X_t) 
 +\frac{1}{2}\Sigma(\tilde X_t)  
\nabla_x y_t^\ast (\tilde X_t)  .
\end{align}
\end{subequations}
We will return to this formulation in Appendix \ref{sec:Appendix}.

%
\section{A brief diversion: Diffusion-based generative modeling} \label{sec:generative}
%

Before discussing numerical implementation of the proposed forward-reverse McKean--Vlasov equations (\ref{eq:FB mean field}), 
we demonstrate how the core idea of diffusion-based generative modeling \cite{sohl2015deep,song2021scorebased}, arises as a special instance of  (\ref{eq:FB mean field}). 
Let us start from the control SDE formulation
\begin{equation} \label{eq:DM1}
{\rm d}X_t = -\frac{1}{2} X_t {\rm d}t + u_t{\rm d}t + {\rm d}B_t
\end{equation}
with initial conditions $X_0 \sim \pi_0 = {\rm N}(0,I)$. The cost function (\ref{eq:costJ})
is implicitly given by 
\begin{equation}
e^{-f(x)} \propto \frac{\pi_{\rm data}(x)}{\pi_0(x)}
\end{equation}
with running cost $c=0$ and $R = I$. Here $\pi_{\rm data}$ denotes the data distribution. We also note
that $\pi_0$ is the invariant distribution of (\ref{eq:DM1}) for $u_t = 0$ and that our forward
SDE (\ref{eq:FB mean field}a) simply becomes
\begin{equation}
{\rm d}\bar X_t = -\left(\frac{1}{2}\bar X_t - \frac{1-\epsilon}{2}\bar C_t^{-1}(\bar X_t-\bar m_t)\right)  {\rm d}t + \sqrt{\epsilon}{\rm d}B_t^+,  \qquad \bar X_0 \sim {\rm N}(0,I).
\end{equation}
with $\bar C_t = I$ and $\bar m_t = 0$ for all $t\ge 0$. Furthermore, setting $\epsilon = 0$, leads to
\begin{equation}
\frac{{\rm d}\bar X_t}{{\rm d}t} = 0.
\end{equation}
Similarly, the reverse SDE (\ref{eq:FB mean field}b) reduces to
\begin{equation}
-{\rm d}\tilde X_t = -\left(\frac{1}{2}\tilde X_t  + \frac{1-\epsilon}{2} \nabla_x \log \tilde \pi_t(\tilde X_t) \right) 
{\rm d}t + \sqrt{\epsilon}{\rm d}B_t^-,  \qquad \tilde X_T \sim \tilde \pi_{\rm data},
\end{equation}
which is of the standard form used in diffusion modeling for $\epsilon = 1$. The desired control term is finally provided by
\begin{equation}
u_t^\ast(x) = \nabla_x \log \tilde \pi_t(x) + x
\end{equation}
and the generative SDE model (\ref{eq:DM1}) turns into
\begin{equation} \label{eq:DM2}
{\rm d}X_t = \frac{1}{2} X_t {\rm d}t  + \nabla_x \log \tilde \pi_t (X_t) {\rm d}t - \frac{1-\epsilon}{2} \nabla_x \log \pi_t(X_t){\rm d}t + \sqrt{\epsilon}{\rm d}B_t, \quad
X_0 \sim {\rm N}(0,I),
\end{equation}
which again reduces to the standard diffusion-based generative model for $\epsilon = 1$. 
A more detailed discussion on the connection between diffusion-based generative modeling and stochastic optimal control can be found in \cite{BRU22}.

%
\section{Numerical implementations} \label{sec:numerical implementation}
%

In this section, we discuss the numerical implementation of the proposed forward-reverse McKean--Vlasov SDEs
(\ref{eq:FB mean field}). We start with Gaussian and EnKF-type approximations \cite{Evensenetal2022,CRS22} before also utilizing diffusion maps 
\cite{diffusion_map1,diffusion_map2} in order to approximate the required grad-log density terms.

%
\subsection{EnKF approximation} \label{sec:EnKF}
%

We develop a numerical implementation of (\ref{eq:FB mean field}) based on the EnKF methodology 
\cite{Evensenetal2022,CRS22}. In particular, we approximate a drift $g_t$, which should satisfy
\begin{equation}
\nabla_x \cdot (\pi_t g_t)  = -\pi_t (\|\psi\|_B^2 - \pi_t[\|\psi\|_B^2])
\end{equation}
for given function $\psi(x)$ and density $\pi_t$,
in the following manner.  We introduce the mean, $m_t^\psi$, of $\psi(x)$ and the 
covariance matrix, $C_t^{x\psi}$, between $x$ and $\psi(x)$ under $\pi_t$. Then an approximative drift term $g_t^{\rm KF}$ is defined by
\begin{equation} \label{eq:bit}
g_t^{\rm KF}(x) := \frac{1}{2} C_t^{x\psi} B^{-1} \left(\psi(x) + m^\psi_t\right).
\end{equation}
This approximation becomes exact for Gaussian density $\pi_t$ and linear function $\psi(x)$. See, for example, \cite{reich2011dynamical,reich2019data}
for the general methodology and \cite{JTMM22} for an application to optimal control. 

We assume that the running cost $c(x)$ is of the form (\ref{eq:running cost}).
Hence, following (\ref{eq:Poisson forward}) and (\ref{eq:bit}), the drift term $\bar g_t$ is approximated by
\begin{equation}
\bar g_t^{\rm KF}(x) := \frac{1}{2} \bar C_t^{xh} S^{-1} \left( h(x) + \bar m_t^h \right).
\end{equation}
Here $\bar C_t^{xh}$ denotes the covariance matrix between $x$ and $h(x)$ with respect to $\bar \pi_t$ and
$\bar m_t^h$ the mean of $h(x)$ under the same distribution. 

The transformation from $\bar X_T$ to $\tilde X_T$ under the terminal cost (\ref{eq:terminal cost})
is performed by the stochastic EnKF \cite{Evensenetal2022,CRS22}; that is,
\begin{equation} \label{eq:EnKF final time}
\tilde X_T = \bar X_T - \bar C_T^{x\xi} \left(\bar C_T^{\xi \xi} + V\right)^{-1}(\xi(\bar X_T) + V^{1/2} \Xi), \qquad \Xi \sim {\rm N}(0,I).
\end{equation}

Following the desired control {\it ansatz} (\ref{eq:affine control}), we also approximate $\nabla_x \log v_t^\ast$ as a linear function using 
the first two moments of $\tilde \pi_t$ and $\bar \pi_t$, respectively; that is,
\begin{subequations}
\begin{align}
\nabla_x \log v_t^{\rm KF} (x) &:= \bar C_t^{-1}(x-\bar m_t^x) -\tilde C_t^{-1}(x-\tilde m_t^x) \\
&= (\bar C_t^{-1} - \tilde C_t^{-1} )x + \left( \tilde C_t^{-1} \tilde m_t^x
- \bar C_t^{-1} \bar m_t^x\right) = A_t x + c_t
\end{align}
\end{subequations}
with
\begin{equation} \label{eq:Ac}
A_t := \bar C_t^{-1} - \tilde C_t^{-1}, \qquad c_t := \tilde C_t^{-1} \tilde m_t^x
- \bar C_t^{-1} \bar m_t^x.
\end{equation}
This approximation leads to the further approximation 
\begin{equation} \label{eq:EnKF tilde g}
\tilde g_t^{\rm KF}(x) := \frac{1}{2} \tilde C_t A_t \left(\Sigma(\tilde m_t^x)-G(\tilde m_t^x)RG(\tilde m_t^x)^{\rm T} \right) \left(A_tx + A_t\tilde m_t^x + 2c_t \right)
\end{equation}
for the drift term $\tilde g_t$ arising from (\ref{eq:Poisson backward}). 

We now summarize our approximations to the drift terms in the forward-reverse McKean--Vlasov equations (\ref{eq:FB mean field}):
\begin{equation} \label{eq:EnKF_forward}
\bar f_t^\epsilon (x) := b(x) -\frac{1-\epsilon}{2}\left( \nabla_x \cdot \Sigma (x) -\Sigma(x) \bar C_t^{-1} (x - \bar m_t^x) \right) 
-\frac{1}{2} \bar C_t^{xh} S^{-1} \left( h(x) + \bar m_t^h \right)
\end{equation}
and
\begin{subequations} \label{eq:EnKF_backward}
\begin{align}
\tilde f_t^\epsilon(x) &:= 
 -b(x) + \nabla_x \cdot \Sigma(x) - \Sigma(x) \bar C_t^{-1}( x - \bar m_t^x)  \\
 &\qquad -\, \frac{1-\epsilon}{2} \left(
\nabla_x \cdot \Sigma(x) - \Sigma(x) \tilde C_t^{-1} (x - \tilde m_t^x) \right) \\
&\qquad -\,\frac{1}{2} \tilde C_t A_t \left(\Sigma(\tilde m_t^x)-G(\tilde m_t^x)RG(\tilde m_t^x)^{\rm T} \right) \left(A_tx + A_t\tilde m_t^x + 2c_t \right).
\end{align}
\end{subequations}
The forward equation (\ref{eq:FB mean field}a) is solved from the initial condition $\bar X_0 = x_0$. The terminal $\bar X_T$ is transformed
into the terminal condition $\tilde X_T$ using (\ref{eq:EnKF final time}). Equation (\ref{eq:FB mean field}b) is solved 
from $t = T$ to $t=0$. The desired approximation to the optimal control $u_t^\ast$ is provided by (\ref{eq:affine control}) with $A_t$ and $c_t$ given by
(\ref{eq:Ac}). We note that $A_t$ is symmetric negative-definite whenever $\tilde C_t \bar C_t^{-1} < I$. In other words, the covariance matrix of the reverse process
has to be strictly smaller than the covariance matrix of the forward process in order for the associated control (\ref{eq:affine control}) to act in a stabilizing manner.

We also note that the McKean--Vlasov contribution (\ref{eq:EnKF_backward}c) stabilizes the reverse dynamics provided
\begin{equation}
\Sigma(x) > G(x)RG(x)^{\rm T} 
\end{equation}
and destabilizes it otherwise. The overall reverse dynamics can still be stable due to the contributions from (\ref{eq:EnKF_backward}a).

%
\subsection{Combined diffusion map and EnKF approximation} \label{sec:DM_EnKF}
%

In this subsection, we propose another implementation of the McKean--Vlasov formulation
(\ref{eq:FB mean field}) combining the EnKF-type approximations for the drift terms
$\bar g_t(x)$ and $\tilde g_t(x)$, respectively, while using diffusion maps \cite{diffusion_map1,diffusion_map2}
for estimating grad-log density terms.

We first consider the forward McKean--Vlasov equations
\begin{equation} \label{eq:forward pure SDE}
{\rm d}\bar X_t = b(\bar X_t) {\rm d}t - \frac{1}{2} \bar C_t^{xh} \left( h(\bar X_t) + \bar m_t^h \right){\rm d}t + \sigma(\bar X_t){\rm d}B_t^{+}.
\end{equation}
The law $\bar \pi_t$ of $\bar X_t$ induces the generator $\mathcal{\bar L}_t$ at time $t$, which is 
defined by
\begin{equation}
\mathcal{\bar L}_t f := \frac{1}{\bar \pi_t} \nabla_x \cdot (\bar \pi_t \Sigma \nabla_x f).
\end{equation}
Here we assume that $\Sigma (x)$ has full rank. It is easy to verify that
\begin{equation} \label{eq:generator}
\mathcal{\bar L}_t {\rm Id} = \nabla_x \cdot \Sigma + \Sigma \nabla_x \log \bar \pi_t,
\end{equation}
where ${\rm Id}:\mathbb{R}^{d_x} \to \mathbb{R}^{d_x}$ denotes the identity map; that is, ${\rm Id}(x) = x$.
Equation (\ref{eq:generator}) suggests the approximation
\begin{equation} \label{eq:DM approximation}
\nabla_x \cdot \Sigma(x) + \Sigma(x) \nabla_x \log \bar \pi_t(x) \approx \frac{\exp(\varepsilon \mathcal{\bar L}_t) {\rm Id} - {\rm Id}}{\varepsilon}(x)
\end{equation}
for $\varepsilon >0$ sufficiently small, where the semi-group $\exp(\varepsilon \mathcal{L}_t)$ will be later replaced by the normalized 
diffusion map approximation as investigated in \cite{WR20}. We introduce the conditional mean
\begin{equation}
\bar m_t^\varepsilon(x) := \exp(\varepsilon \mathcal{\bar L}_t){\rm Id}(x)
\end{equation}
and obtain the compact representation
\begin{equation}
 \frac{\exp(\varepsilon \mathcal{\bar L}_t) {\rm Id} - {\rm Id}}{\varepsilon}(x) = \varepsilon^{-1} (\bar m_t^\varepsilon(x)-x).
 \end{equation}

Approximation (\ref{eq:DM approximation}) is plugged into the reverse McKean--Vlasov equation to yield
\begin{subequations} \label{eq:reverse time SDE}
\begin{align}
-{\rm d}\tilde X_t &= -b(\tilde X_t){\rm d}t 
+ \varepsilon^{-1}(\bar m_t^\varepsilon(\tilde X_t) - \tilde X_t){\rm d}t 
- \tilde g_t^{\rm KF}(\tilde X_t){\rm d}t \\
& \qquad -\, \frac{1-\epsilon}{2} \left(
\nabla_x \cdot \Sigma(\tilde X_t) - \Sigma(\tilde X_t) \tilde C_t^{-1} (\tilde X_t - \tilde m_t^x) \right) + \sqrt{\epsilon} \sigma(\tilde X_t) {\rm d}B_t^{-},
\end{align}
\end{subequations}
where $\tilde g_t^{\rm KF}(x)$ is defined by (\ref{eq:EnKF tilde g}) as before and $\epsilon \in [0,1]$. 
 
Approximation (\ref{eq:DM approximation}) can also be used in the forward McKean--Vlasov equation and (\ref{eq:forward pure SDE}) gets replaced by
\begin{subequations} \label{eq:forward time SDE}
\begin{align}
{\rm d}\bar X_t &= b(\bar X_t) {\rm d}t - \frac{1}{2} \bar C_t^{xh} \left( h(\bar X_t) + \bar m_t^h \right){\rm d}t \\
& \qquad- \frac{1-\epsilon}{2\varepsilon} (\bar m_t^\varepsilon(\bar  X_t) - \bar X_t) {\rm d}t 
+ \sqrt{\epsilon}\sigma(\bar X_t){\rm d}B_t^{+}.
\end{align}
\end{subequations}

Furthermore, the law $\tilde \pi_t$ of $\tilde X_t$ induces the generator $\mathcal{\tilde L}_t$ at time $t$, which is 
defined by
\begin{equation}
\mathcal{\tilde L}_t f := \frac{1}{\tilde \pi_t} \nabla_x \cdot (\tilde \pi_t \Sigma \nabla_x f),
\end{equation}
and which can be used to approximate 
\begin{equation} 
\nabla_x \cdot \Sigma(x) + \Sigma(x) \nabla_x \log \tilde \pi_t(x) \approx \varepsilon^{-1}(\tilde m_t^\varepsilon(x)
-x), \quad
\tilde m_t^\varepsilon(x) := \exp(\varepsilon \mathcal{\tilde L}_t) {\rm Id} (x),
\end{equation}
in the reverse SDE drift function (\ref{eq:tilde f}b). In other words, (\ref{eq:reverse time SDE}) gets replaced by
\begin{subequations} \label{eq:reverse time SDE II}
\begin{align}
-{\rm d}\tilde X_t &= -b(\tilde X_t){\rm d}t 
+ \varepsilon^{-1}(\bar m_t^\varepsilon(\tilde X_t) - \tilde X_t){\rm d}t 
- \tilde g_t^{\rm KF}(\tilde X_t){\rm d}t \\
& \qquad -\, \frac{1-\epsilon}{2\varepsilon}(\tilde m_t^\varepsilon(\tilde X_t)-\tilde X_t)  + \sqrt{\epsilon} \sigma(\tilde X_t) {\rm d}B_t^{-}.
\end{align}
\end{subequations}
We note that the McKean--Vlasov equations 
(\ref{eq:forward time SDE}) and (\ref{eq:reverse time SDE II}) become deterministic under the choice $\epsilon = 0$.

%
\subsection{Numerical implementation details} \label{sec:implementation}
%

We numerically implement the McKean--Vlasov equations (\ref{eq:FB mean field}) with drift terms given by
(\ref{eq:EnKF_forward}) and (\ref{eq:EnKF_backward}) using a Monte Carlo approach; that is, we propagate
an ensemble of $M$ particles $\bar X_t^{(i)}$, $i=1,\ldots,M$, forward in time,  $t\in [0,T]$, and an equally sized ensemble of particles 
$\tilde X_t^{(i)}$ backward in time. The required mean values and covariance matrices are replaced by
their empirical estimators. A covariance inflation of $\delta I$, $\delta >0$, is added to the empirical covariance matrices in order to ensure that they remain non-singular
\cite{Evensenetal2022}. For the purpose of this paper, we apply a simple Euler--Maruyama time-stepping
method with step-size $\Delta t$ both in forward and reverse time \cite{kloeden1991numerical}. More robust time-stepping methods can be based on the 
formulations proposed and investigated in \cite{amezcuaensemble}.

When running the EnKF-type formulation from Subsection \ref{sec:EnKF},
we set the initial conditions to $\bar X_0^{(i)} = x_0$ in the forward equation and use $\epsilon > 0$ for the first time-step in order to diffuse these identical 
particles. All subsequent time-steps employ then $\epsilon = 0$ (deterministic dynamics). 
The terminal ensemble $\tilde X_T^{(i)}$, $i=1,\ldots,M$,  is computed using the forward ensemble $\bar X_T^{(i)}$ at final time and 
a standard ensemble implementation of the EnKF update (\ref{eq:EnKF final time}) \cite{Evensenetal2022,CRS22}.
The reverse McKean--Vlasov equations are solved with $\epsilon = 0$ (deterministic dynamics). 

The reverse McKean--Vlasov equation (\ref{eq:reverse time SDE}) also requires the approximation of the semi-group $\exp(\epsilon \mathcal{\bar L}_t)$. 
We now describe an implementation which follows ideas from \cite{GLMR23}. Based on the forward-in-time samples $\{\bar X_t^{(i)}\}$, we first define the diffusion map 
approximation
\begin{equation} \label{eq:DM}
P_t^\varepsilon = D(v_t^\varepsilon) R_t^\varepsilon D(v_t^\varepsilon),
\end{equation}
where the matrix $R_t^\varepsilon \in \mathbb{R}^{M\times M}$ has entries
\begin{equation}
(R_t^\varepsilon)_{ij} = \exp \left(\frac{-1}{2\varepsilon} (\bar X_t^{(i)}- \bar X_t^{(j)})^{\rm T} \left( \Sigma(\bar X_t^{(i)}) + \Sigma(\bar X_t^{(j)}) \right)^{-1}
(\bar X_t^{(i)}- \bar X_t^{(j)}) \right),
\end{equation}
$D(v) \in \mathbb{R}^{M\times M}$ denotes the diagonal matrix with diagonal entries given by the vector $v\in \mathbb{R}^M$, and 
the vector $v_t^\varepsilon \in \mathbb{R}^M_+$ is
chosen such that
\begin{equation}
\sum_{i=1}^M (P_t^\varepsilon)_{ij} = \sum_{j=1}^M (P_t^\varepsilon)_{ij} = \frac{1}{M}.
\end{equation}
The vector $v_t^\varepsilon$ can be computed efficiently using the iterative algorithm from \cite{WR20}.

We define a probability vector $p_t^\varepsilon (x) \in \mathbb{R}^M$ for all $x \in \mathbb{R}^{d_x}$ as follows. First we introduce
the vector $r_t^\epsilon (x) \in \mathbb{R}^M$ with entries
\begin{equation}
(r_t^\varepsilon)_i (x) = \exp \left(\frac{-1}{2\varepsilon} (\bar X_t^{(i)}- x)^{\rm T} \left( \Sigma(\bar X_t^{(i)}) + \Sigma(x) \right)^{-1}
(\bar X_t^{(i)}- x) \right)
\end{equation}
for $i=1,\ldots,M$.
Next we compute the vector $v_t^\varepsilon$ in (\ref{eq:DM}), which in turn is used to define
\begin{equation}
p_t^\varepsilon(x) = \frac{D(v_t^\varepsilon) r_t^\varepsilon (x)}{(v_t^\varepsilon)^{\rm T} r_t^\epsilon (x)}.
\end{equation}
Setting $\epsilon = \Delta t$, we finally obtain the approximation
\begin{equation} \label{eq:DM app}
\bar m_t^{\Delta t}(x) = \exp(\Delta t \mathcal{\bar L}_t){\rm Id}(x) \approx \bar{\mathcal{X}}_t p_t^{\Delta t} (x)
\end{equation}
with
\begin{equation}
\bar{\mathcal{X}}_t  = \left( \bar X_t^{(1)},\bar X_t^{(2)}, \ldots,\bar X_t^{(M)}\right) \in \mathbb{R}^{d_x\times M}.
\end{equation}
The reverse McKean--Vlasov equation (\ref{eq:reverse time SDE}), here with $\epsilon = 1$ for simplicity, is integrated backward in time using the following split-step scheme:
\begin{subequations}
\begin{align}
\tilde X_{t_{n-1/2}}^{(i)} &= \tilde X_{t_n}^{(i)} - \Delta t b(\tilde X_{t_n}^{(i)}) - \Delta t \tilde g_{t_n}^{\rm KF}(\tilde X_{t_n}^{(i)}){\rm d}t + \sqrt{\Delta t} \sigma(\tilde X_{t_n}
^{(i)}) \Xi_{t_n}^{(i)},\\
\tilde X_{t_{n-1}}^{(i)} &= \bar{\mathcal{X}}_{t_{n-1}}p_{t_{n-1}}^{\Delta t} (\tilde X_{t_{n-1/2}}^{(i)}),
\end{align}
\end{subequations}
$i=1,\ldots,M$, where $\Xi_{t_n}^{(i)}$ denote independent standard Gaussian random variables with mean zero and identity covariance matrix, and $t_{n+1} = t_n
+\Delta t$. This implementation guarantees that any reverse time solution $\tilde X_{t_n}$ is contained in the convex hull generated by the
forward samples $\{\bar X_{t_n}^{(i)}\}$ \cite{GLMR23}, which is a desirable property in terms of $\tilde \pi_t \propto v_t^\ast \bar \pi_t \ll \bar \pi_t$. 
The approximation (\ref{eq:DM app}) can also be applied in the forward McKean--Vlasov equation (\ref{eq:forward time SDE}) in case $\epsilon < 1$.

Please note that we propose to still approximate the optimal control $u^\ast_t(x)$ by (\ref{eq:affine control}) with $A_t$ and 
$c_t$ given by (\ref{eq:Ac}). However, diffusion map approximations could also be used in this context. See also \cite{MRO20}.

It should be noted that the diffusion map approximation requires $M \gg d_x$, which is in contrast to the EnKF-type approximation from 
Subsection \ref{sec:EnKF}, which can be implemented with as little as $M = d_x  + 1$ particles in order to render the empirical covariance matrices 
non-singular and, hence, to obtain well-defined evolution equations at the particle level. This desirable property is verified in the following section.  
However, EnKF-type approximations can fail due to stability and accuracy reasons and need to then be  
augmented by diffusion map approximations as we also demonstrate in the following section.

%
\section{Numerical examples} \label{sec:example}
%

In this section, we discuss numerical findings for two simple control problems. The first control problems is to stabilise the unstable equilibrium position
of a mathematical pendulum. This control problem is nonlinear in nature and linear feedback control laws will be suboptimal. However, we find that
(\ref{eq:affine control}) is nevertheless able to drive the pendulum from the stable to the unstable equilibrium in finite time. The second control problem
concerns the stabilization of an unstable equilibrium point of one-dimensional nonlinear Langevin dynamics. Here the computational challenge
arises from the fact that the drift term $b(x)$ becomes strongly destabilizing when integrated backward in time which requires a diffusion map approximation
of the stabilizing grad-log density term in the reverse McKean--Vlasov dynamics. 

%
\subsection{Inverted pendulum}
%

As a first example, we consider the inverted pendulum with control \cite{Meyn}. The state variable is $x = (\theta,\dot{\theta})^{\rm T} \in \mathbb{R}^2$ with
equations of motion
\begin{subequations} \label{eq:example}
\begin{align}
{\rm d} \theta &= \dot{\theta}{\rm d}t,\\
{\rm d} \dot{\theta} &= \sin (\theta){\rm d}t - \cos(\theta)u {\rm d}t + \rho {\rm d}B_t,
\end{align}
\end{subequations}
and $\rho = 1$. Consider the running cost  
\begin{equation}
c(x) = \frac{10}{2} \|\dot{\theta}\|^2
\end{equation}
over a finite time window $t\in [0,1]$ with final cost 
\begin{equation} \label{eq:final cost}
f(x) = \frac{10^3}{2} \|x\|^2.
\end{equation}
The control is scalar-valued and the penalty term in the cost function uses $R = 10$. Note that $G(x)$ is position dependent and
that $(0,0)$ is an unstable equilibrium point of the deterministic pendulum ($\rho = 0$). The noise acts only on the momentum equation.
We seek a control law of the form (\ref{eq:affine control}) 
that leads us from the stable equilibrium $(\pi,0)$ to the unstable one $(0,0$) at time $T=1$. We
initialize $\bar X_0 = (\pi,0.1)$; that is, we give the stable equilibrium a small initial kick.

The numerical experiment uses $M=3$ ensemble members in the EnKF-type formulation from
Subsection \ref{sec:EnKF}. The time-step is
set to $\Delta t = 10^{-4}$ and $\epsilon = 0.01$ for the first time-step of the forward dynamics. The additive covariance inflation factor is
set to $\delta = 10^{-4}$. The results from the forward and reverse McKean--Vlasov equations can be found in Figures \ref{fig1}.
It can be seen that the forward dynamics stays close to the stable equilibrium point $(\pi,0)$ over the whole time interval $[0,1]$. 
The stiff final cost implied by (\ref{eq:final cost}) transforms the ensemble $\bar X_T^{(i)}$ to an ensemble $\tilde X_T^{(i)}$, $i=1,\ldots,M$, which is tightly 
clustered about the unstable equilibrium $(0,0)$ at $T=1$. Solving the reverse McKean--Vlasov equations leads us gradually back to the unstable equilibrium, 
which is reached at time $t=0$.

We then apply the computed control (\ref{eq:affine control}) to the inverted pendulum equations (\ref{eq:example}) 
with the noise set to zero ($\rho = 0)$. The time evolution of the resulting solution is displayed in Figure \ref{fig2}. 
The computed time-dependent affine control is able to drive the solution from the stable to the unstable equilibrium point over a unit time interval.
The time evolution of the associated velocity indicates that strong acceleration terms are required and indeed provided by the
computed control law.

\begin{figure}[!htb]
	\begin{center}
	\includegraphics[width=0.45\textwidth]{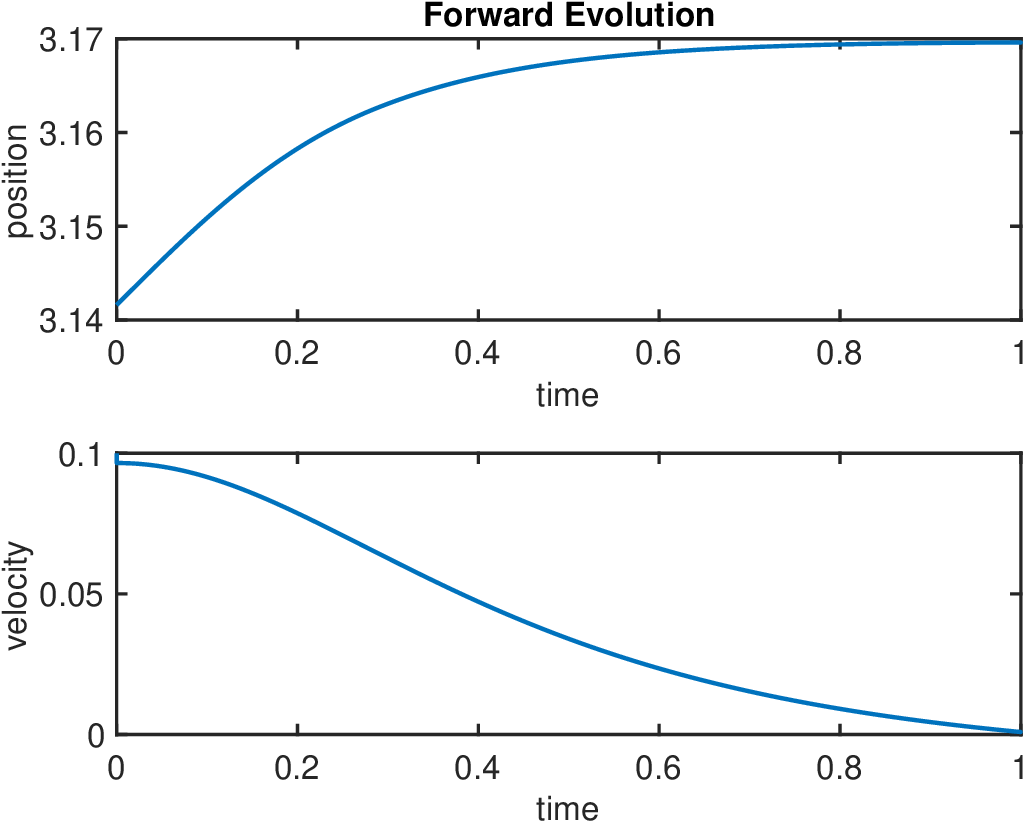} $\qquad$
	\includegraphics[width=0.45\textwidth]{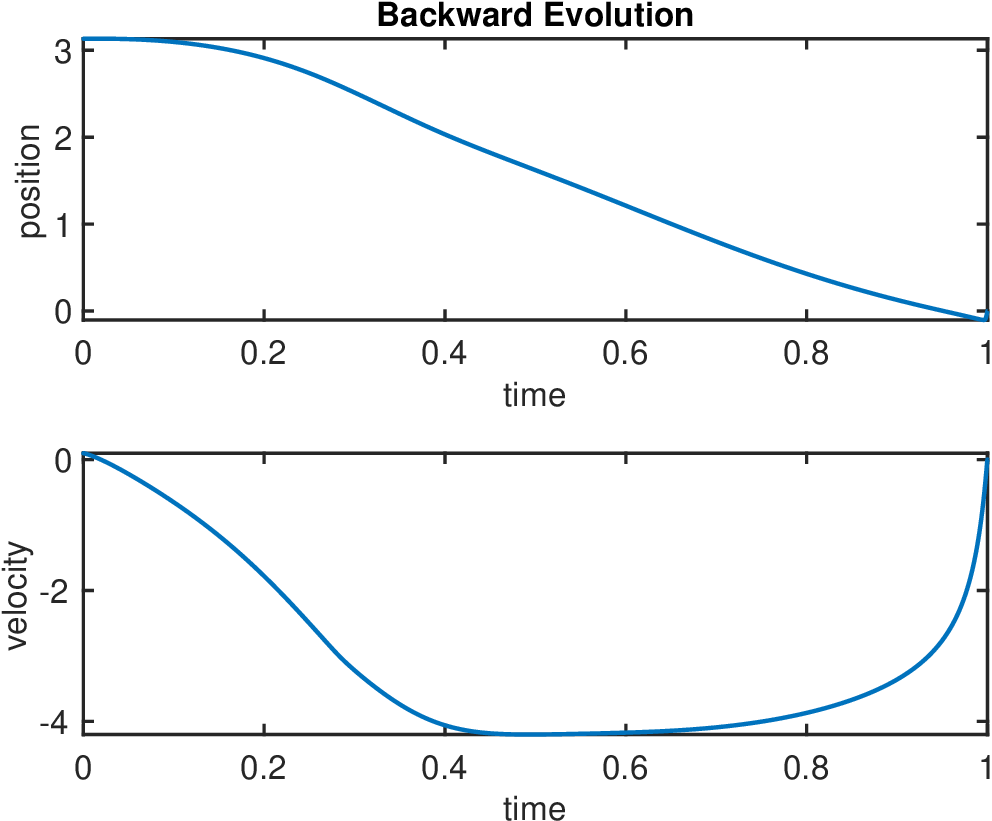}
	\end{center}
	\caption{Time evolution of the ensemble mean from the forward evolution (left panel) and the reverse evolution (right panel) both in terms
	of pendulum position and velocity. It can be seen that the reverse evolution connects the stable and unstable equilibrium points while the forward 
	dynamics stays close to the stable equilibrium.} \label{fig1}
\end{figure}

\begin{figure}[!htb]
	\begin{center}
	\includegraphics[width=0.45\textwidth]{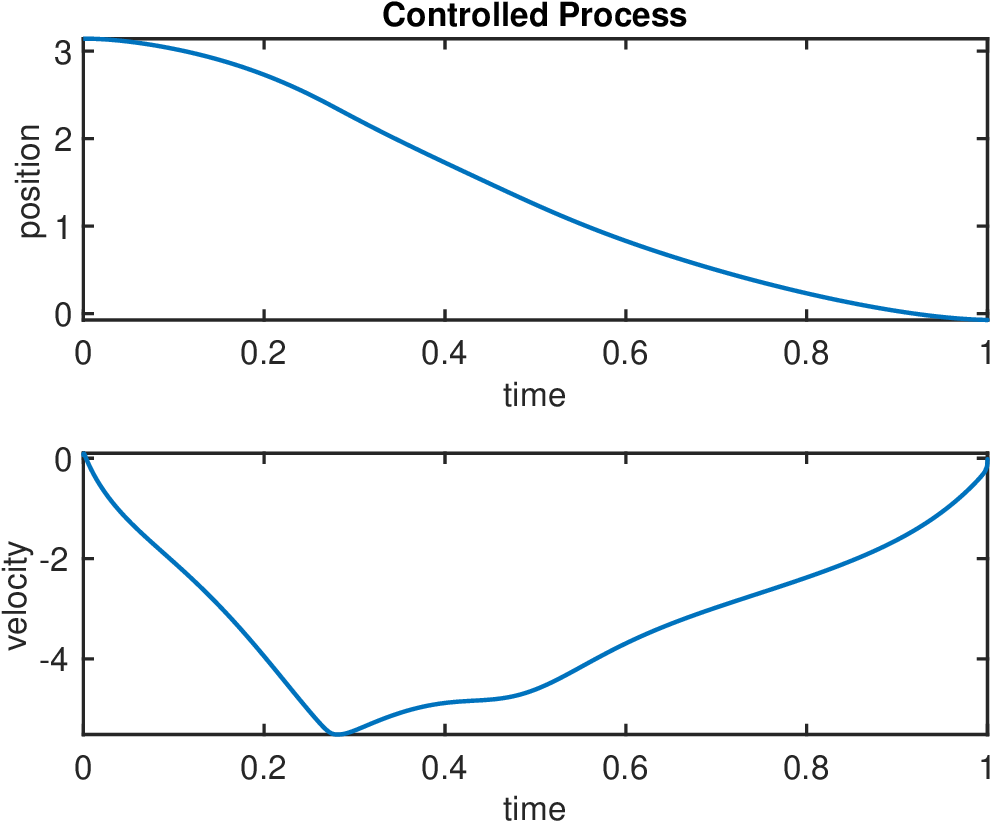} 
	\end{center}
	\caption{Time evolution of the position and velocity of the pendulum under the computed affine control law. The pendulum leaves its initial stable solution
	to reach the unstable equilibrium at time $T=1$. The initial and final velocities are essentially zero.} \label{fig2}
\end{figure}

%
\subsection{Controlled Langevin dynamics} \label{sec:CLD}
%

As a second example, we consider the controlled Langevin dynamics
\begin{equation} \label{eq:LD}
{\rm d}X_t = - (X_t^3 - X_t){\rm d}t + u_t(X_t){\rm d}t + {\rm d}B_t
\end{equation}
with unstable equilibrium at $x = 0$ and two stable equilibria at $x = \pm 1$. The imposed running cost is
$c(x) = 100 x^2/2$ and the terminal cost at $T=30$ is $f(x) = x^2/2$. 
We implement the combined diffusion map and EnKF scheme from Subsection \ref{sec:DM_EnKF}
with step-size $\Delta t = 0.01$, $M = 8$ ensemble members, and $\epsilon = 0$ in the forward  (\ref{eq:forward time SDE}) and reverse
(\ref{eq:reverse time SDE}) dynamics except for the first ten steps of the forward dynamics (\ref{eq:forward time SDE}) where we set $\epsilon = 1$. 
We also employ additive ensemble inflation with $\delta = 10^{-4}$. 

The diffusion map approximation of the grad-log density term in the reverse dynamics is essential for counterbalancing
the strongly unstable contribution stemming from the drift term in (\ref{eq:LD}) when integrated backward in time. We also find that 
the Gaussian approximation to the grad-log density term in the forward dynamics is insufficient and that
the diffusion map approximation in (\ref{eq:forward time SDE}) significantly improves the behaviour of the deterministic formulation 
($\epsilon = 0$). The scale parameter in the diffusion map approximation is set to $\varepsilon = \Delta t$. 

Except for brief transition periods at initial and final time, the control law (\ref{eq:affine control}) is essentially time-independent. See Figure \ref{fig3} and
the Appendix \ref{sec:Appendix} for a related discussion of infinite horizon optimal control problems. The effectiveness of the control is demonstrated in
Figure \ref{fig4}.

\begin{figure}[!htb]
	\begin{center}
	\includegraphics[width=0.45\textwidth]{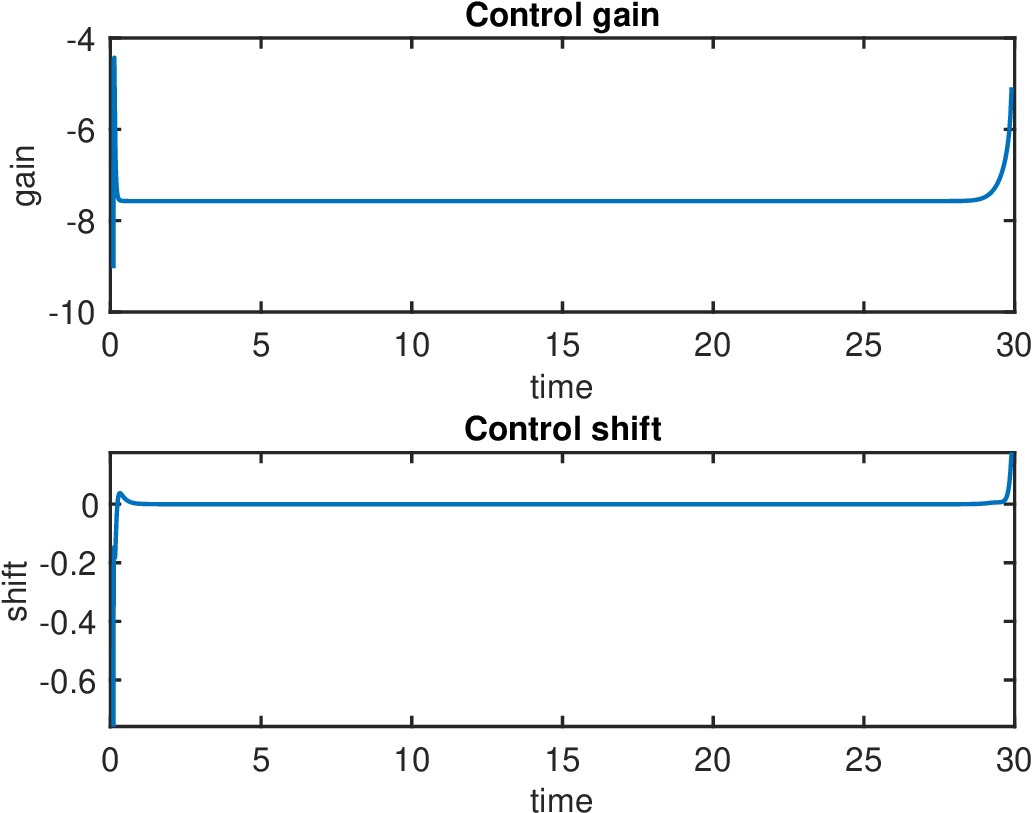} 
	\end{center}
	\caption{Computed control gain $A_t$ and shift $c_t$ from the forward and reverse McKean--Vlasov evolution equations. The control is time-independent except
	for brief transition periods at the beginning and end of the simulation interval.} \label{fig3}
\end{figure}

\begin{figure}[!htb]
	\begin{center}
	\includegraphics[width=0.45\textwidth]{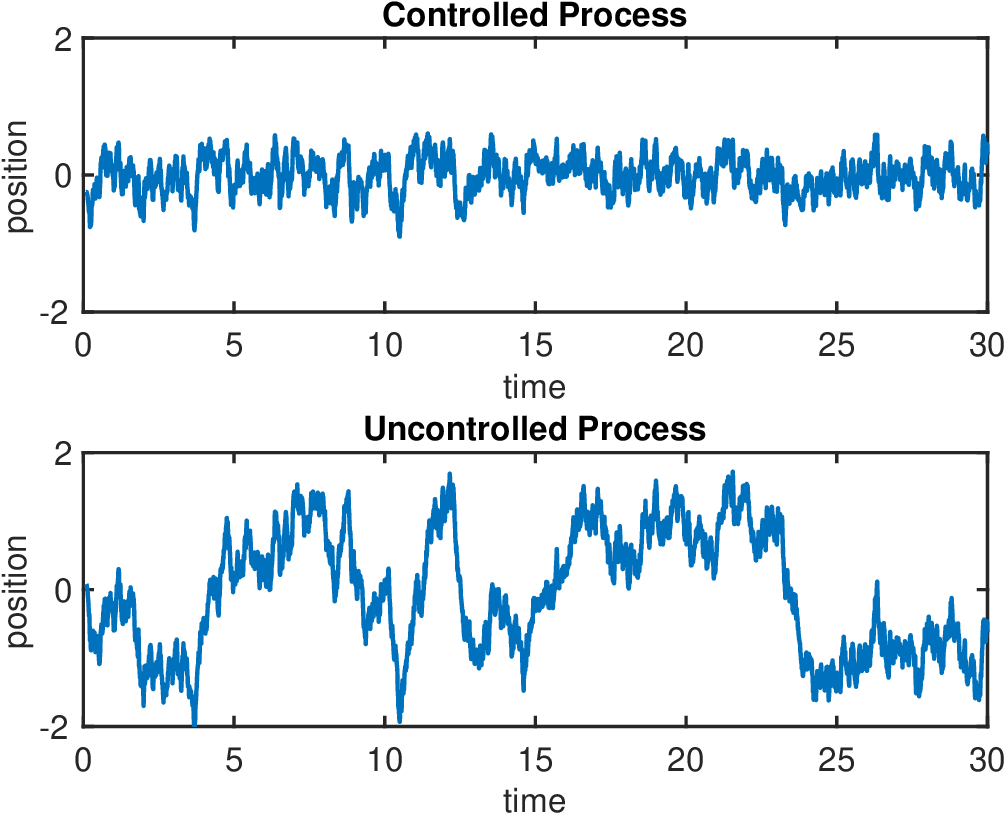} 
	\end{center}
	\caption{Comparison of the controlled and uncontrolled Langevin dynamics. Displayed is the time evolution of a single realisation of the SDE (\ref{eq:LD}) with and 
	without control.} \label{fig4}
\end{figure}

%
\section{Conclusions} \label{sec:conclusions}
%

Solving the HJB equation numerically constitutes a challenging task. Here we have provided a new perspective by 
combining forward and reverse evolution McKean--Vlasov equations with the tremendously successful EnKF methodology. We have done so by building on the previous work
\cite{MO22} and have generalized it to a wider class of forward and reverse McKean--Vlasov equations. In order to keep those
equations computationally tractable we have employed EnKF-type approximations to the McKean--Vlasov interaction terms. While not delivering
optimal control laws, the resulting time-dependent affine control laws can either be sufficient in themselves or, alternatively, may provide the starting point for
more accurate approximations such as the diffusion map approach outlined in Subsection \ref{sec:DM_EnKF}. 
We have applied the proposed methodology to a two-dimensional nonlinear control problem using only
$M=d_x + 1 = 3$ particles. Such a small ensemble size constitutes a significant improvements over the results presented in \cite{MO22} and \cite{JTMM22}. It remains
to be demonstrated that the methodology can be further extended to high-dimensional control problems in the spirit of EnKF applications to 
data assimilation, which deliver useful approximations even with $M \ll d_x$ ensemble members \cite{Evensenetal2022}. At the same time, strongly nonlinear
Langevin dynamics (\ref{eq:LD}) requires more sophisticated approximations of the grad-log terms in terms of diffusion maps. Still the ensemble size
could be kept at a moderate level ($M=8$) in order to 
recover the linear control law (\ref{eq:affine control}) robustly. 

\medskip

\noindent
{\bf Acknowledgment.} This work has been funded by Deutsche Forschungsgemeinschaft (DFG) - Project-ID 318763901 - SFB1294. The author thanks Manfred Opper for
insightful discussions on the subject of this work.

%
\section{Appendix} \label{sec:Appendix}
%

In this appendix, we discuss an extension of the proposed methodology to infinite horizon control problems with cost function
\begin{equation} \label{eq:infinte time}
J_\infty (x_0,u_{0:\infty}) = \mathbb{E} \left[ \int_0^\infty e^{-\gamma t} \left( c(X_t) + \frac{1}{2} \|u_t\|_R^2 \right) {\rm d}t \right].
\end{equation}
Here $\gamma \ge 0$ denotes the discount factor. The associated transformed HJB equation becomes
\begin{subequations} \label{eq:HJB infinity}
\begin{align}
-\partial_t v_t^\ast 
&= b \cdot \nabla_x  v_t^\ast + \frac{1}{2} \Sigma : D_x^2  v_t^\ast  \\
& \qquad -\,
\left(c + \gamma \log v_t^\ast + \frac{1}{2} \|\sigma^{\rm T} \nabla_x \log v_t^\ast ||^2  -  
\frac{1}{2} \|R G^{\rm T} \nabla_x \log v_t^\ast \|^2_R   \right) v_t^\ast 
\end{align}
\end{subequations}
with terminal condition $v_\infty^\ast = 1$.

The only modification to the finite horizon formulations from Section \ref{sec:mean field} concerns the drift function (\ref{eq:tilde f}), where $\tilde g_t(x)$ has to now satisfy the Poisson equation
\begin{equation} 
\nabla_x \cdot (\tilde \pi_t \tilde g_t) = -\tilde \pi_t \left( \gamma \log v_t^\ast + \frac{1}{2}\|\sigma^{\rm T} 
\nabla_x \log v_t^\ast \|^2 - \frac{1}{2} \|R G^{\rm T} \nabla_x \log v^\ast_t\|^2_R - \zeta_t\right)
\end{equation}
and the EnKF-based approximation of $\tilde g_t$ becomes
\begin{equation}
\tilde g_t^{\rm KF}(x) := \frac{1}{2} \tilde C_t \left\{ \gamma I + A_t \left(\Sigma(\tilde m_t^x)-G(\tilde m_t^x)RG(\tilde m_t^x)^{\rm T} \right)\right\} \left(A_tx + A_t\tilde m_t^x + 2c_t \right)
.
\end{equation}
Please note that $\tilde C_t A_t = \tilde C_t \bar C_t^{-1} - I < 0$ under the assumption that $\tilde C_t  < \bar C_t$. Hence, the additional drift term
stabilizes the time evolution of the reverse covariance matrix $\tilde C_t$ towards $\bar C_t$.

One expects the forward process (\ref{eq:determinstic McKean Vlasov}a) to reach an equilibrium distribution with mean $\bar m_{\rm eq}$ and covariance matrix $\bar C_{\rm eq}$ for $t>0$ sufficiently large. Furthermore, upon setting $\epsilon = 0$ in (\ref{eq:FB mean field}) and since we are in equilibrium, the time derivative ${\rm d}\bar X_t/{\rm d}t$ will either be zero or can be assumed to be relatively small. One may then fix these quantities in the reverse  process  (\ref{eq:determinstic McKean Vlasov}b)-(\ref{eq:determinstic McKean Vlasov}c), which, in turn, is integrated backward in time till an equilibrium distribution is reached with mean $\tilde m_{\rm eq}$ and covariance matrix $\tilde C_{\rm eq}$. The optimal control is provided by
\begin{equation}
u_t^\ast(x) = R G(x)^{\rm T} \left( \bar C_{\rm eq}^{-1}(x-\bar m^x_{\rm eq}) - \tilde C_{\rm eq}^{-1}(x-\tilde m_{\rm eq}^x) \right).
\end{equation}
Such a methodology provides an approximation to the stationary solution of the HJB equation (\ref{eq:HJB infinity}).

The combined EnKF and diffusion map approximation formulation from Section \ref{sec:DM_EnKF} can be generalized to the infinite horizon optimal control problem in a similar fashion. Please note that the numerical results from Subsection \ref{sec:CLD} already implied an essentially time-independent control law.

Alternatively, one can follow the actor-critic methodology to stochastic optimal control \cite{Meyn} and
introduce a family of control laws $u_\theta(x)$ parametrized by $\theta \in \mathbb{R}^{d_\theta}$ and set $\gamma = 0$ in (\ref{eq:infinte time}). For example, using a (stationary) linear control law of the form (\ref{eq:affine control}), the adjustable parameters, $\theta$, would be given by the (constant) matrix $A_t$ and the (constant) vector $c_t$.

More specifically, the actor chooses parameters, $\theta$,
and considers the controlled SDE 
\begin{equation}
    {\rm d}X_t = b(X_t){\rm d}t + G(X_t)u_\theta(X_t)
    {\rm d}t + \sigma(X_t) {\rm d}B_t.
\end{equation}
The generator of this SDE is denoted by $\mathcal{L}_\theta$. It is assumed that the SDE possesses an invariant density $\pi_\theta$; that is,
\begin{equation}
\mathcal{L}_\theta^\dagger \pi_\theta = 0,
\end{equation}
where $\mathcal{L}_\theta^\dagger$ denotes the adjoint of 
$\mathcal{L}_\theta$ \cite{Pavliotis2016}. The optimal $\theta_\ast$ is determined by
\begin{equation}
    \theta_\ast = \arg \min_\theta \pi_\theta[c_\theta]
\end{equation}
with cost
\begin{equation}
    c_\theta(x) = c(x) +  \frac{1}{2} \| 
    u_\theta(x)\|_R^2 .
\end{equation}
The critic provides the value function
$y_\theta$, which satisfies the stationary HJB equation
\begin{equation} \label{eq:ac_HJB}
    \mathcal{L}_\theta y_\theta + c_\theta - \pi_\theta[c_\theta] =0.
\end{equation}
Given the value function $y_\theta(x)$, the chosen parameter, $\theta$, can now be improved using the gradient \cite{Meyn}
\begin{equation} \label{eq:ac_gradient}
\nabla_\theta \pi_\theta [c_\theta]
= \pi_\theta \left[ (\nabla_\theta \mathcal{L}_\theta)
y_\theta \right] + \pi_\theta \left[ \nabla_\theta
c_\theta \right] = \pi_\theta \left[ \nabla_\theta u_{\theta}^{\rm T} \left( G^{\rm T} \nabla_x y_\theta + u_{\theta}  \right) \right] 
\end{equation}
and the optimal parameter value satisfies $\nabla_\theta \pi_{\theta_\ast}[c_{\theta_\ast}] =0$.

In order to extend our McKean--Vlasov approach to this control setting, we replace the stationary HJB equation (\ref{eq:ac_HJB}) by the forward-in-time HJB equation
\begin{equation}
\partial_t y_t = \mathcal{L}_\theta y_t + c_\theta - \pi_\theta [c_\theta]
\end{equation}
and apply the transformation $v_t(x) = \exp(-y_t(x))$ to obtain the modified HJB equation
\begin{equation}
\partial_t v_t = \mathcal{L}_\theta v_t - \left( 
c_\theta + \frac{1}{2}\|\sigma^{\rm T}\nabla_x \log v_t \|^2 - \zeta_t\right) v_t
\end{equation}
for $t\ge 0$ with initial condition $v_0(x) = 1$ and $\zeta_t$ an appropriate normalization constant. 

Adapting our previously developed methodology, we introduce the density
\begin{equation}
    \tilde \pi_t(x) := Z_t^{-1} v_{t}(x) \pi_\theta (x)
\end{equation}
with $Z_t = \pi_\theta [v_{t}]$ and find that
\begin{equation} \label{eq:ac_value}
    \nabla_x y_\theta = \nabla_x \log \pi_\theta - \lim_{t\to \infty} \nabla_x \log \tilde \pi_t .
\end{equation}
Furthermore, the density $\tilde \pi_t$ satisfies the forward evolution equation
\begin{subequations} \label{eq:ac_FPE}
\begin{align}
\partial_t \tilde \pi_t &= -\nabla_x \cdot (\tilde \pi_t 
\tilde b_{\theta}) + \frac{1}{2} \nabla_x \cdot ( 
\nabla_x \cdot (\tilde \pi_t \Sigma))
- \tilde \pi_t \left( c_\theta + \frac{1}{2}\|\sigma^{\rm T} \nabla_x \log v_{t} \|^2 - \tilde \zeta_{t}\right)\\
&= -\mathcal{L}_\theta \tilde \pi_t + 
\nabla_x \cdot(\tilde \pi_t \nabla_x \log v_t)
- \tilde \pi_t \left( c_\theta + \frac{1}{2}\|\sigma^{\rm T} \nabla_x \log v_{t} \|^2 - \tilde \zeta_{t}\right),
\end{align}
\end{subequations}
with modified drift function
\begin{equation}
    \tilde b_\theta (x) := 
    -b(x) - G(x) u_\theta (x) + \nabla_x \cdot \Sigma (x)
    + \Sigma (x) \nabla_x \log \pi_\theta (x)
\end{equation}
and normalisation constant $\tilde \zeta_t$.
The associated forward McKean--Vlasov evolution equation is for $\epsilon =0$ given by
\begin{equation}
    \frac{{\rm d}\tilde X_t}{{\rm d}t} = 
    \tilde b_{\theta} (\tilde X_t)
     - \tilde g_t(\tilde X_t) 
    - \frac{1}{2}
    \left( \nabla_x \cdot \Sigma(x) + \Sigma(x)
    \nabla_x \log \tilde \pi_t(x) \right) , 
\end{equation}
with initial $\tilde X_0 \sim \pi_\theta$ and 
and the McKean--Vlasov drift term $\tilde g_t(x)$ has to now satisfy
\begin{equation} \label{eq:ac_Poisson}
\nabla_x \cdot (\tilde \pi_t \tilde g_t) = 
- \tilde \pi_t\left(c_\theta +\frac{1}{2} \|\sigma^{\rm T} \nabla_x \log v_{t}\|^2 - \tilde \zeta_{t} \right).
\end{equation}
We may assume that (\ref{eq:ac_FPE}) possesses an invariant density $\tilde \pi_\theta$. Then (\ref{eq:ac_value}) reduces to
\begin{equation}
    \nabla_x y_\theta = \nabla_x \log \pi_\theta -  \nabla_x \log \tilde \pi_\theta .
\end{equation}

Using (\ref{eq:ac_value}), the chosen parameters can be improved via standard gradient descent based upon the gradient (\ref{eq:ac_gradient}) giving rise to time-dependent parameters, $\theta_t$, which, under suitable assumptions, converge to the optimal $\theta_\ast$. Furthermore, provided the parameters are adjusted slowly enough in time, one can make the assumption that $\mathcal{L}_{\theta_t} \pi_{\theta_t} \approx 0$. 
These assumptions suggest the coupled set of forward McKean--Vlasov evolution equations
\begin{subequations} \label{eq:ac_ODEs}
    \begin{align}
        \frac{{\rm d}X_t}{{\rm d}t} &= b(X_t) + G(X_t)u_{\theta_t}(X_t) 
  -\frac{1}{2}\nabla_x \cdot \Sigma (X_t) - \frac{1}{2}\Sigma(X_t) \nabla_x \log \pi_t(X_t) 
    ,\\
    \frac{{\rm d}\tilde X_t}{{\rm d}t}  &= 
    -b(\tilde X_t) - G(\tilde X_t) u_\theta (\tilde X_t) + \frac{1}{2} \nabla_x \cdot \Sigma (\tilde X_t)
    + \Sigma (\tilde X_t) \nabla_x \log \pi_t (\tilde X_t)
     - \tilde g_t(\tilde X_t) \\
     & \qquad \qquad 
    -\frac{1}{2}\Sigma(\tilde X_t) \nabla_x \log \tilde \pi_t(\tilde X_t) ,\\
    \frac{{\rm d} \theta_t}{{\rm d}t} &= -\delta \left(
    \pi_t \left[ (\nabla_\theta \mathcal{L}_{\theta_t})
y_t \right] + \pi_t \left[ \nabla_\theta
c_{\theta_t} \right] \right) =
-\delta \,\pi_t \left[ \nabla_\theta u_{\theta_t}^{\rm T} \left( G^{\rm T} \nabla_x y_t + u_{\theta_t}  \right) \right]
    \end{align}
\end{subequations}
where $\delta >0$ is sufficiently small, $\tilde g_t(x)$ satisfies (\ref{eq:ac_Poisson}), and
\begin{equation}  \label{eq:ac_y}
    \nabla_x y_t(x) := \nabla_x \log \pi_t(x) -  \nabla_x \log \tilde \pi_t(x) .
\end{equation}
Here $\pi_t$ denotes the law of $X_t$ and $\tilde \pi_t$ the law of $\tilde X_t$. The numerical approximations introduced in Section \ref{sec:numerical implementation} can now be applied to this system of McKean--Vlasov SDEs as well. 

In line with the previously stated (\ref{eq:determinstic McKean Vlasov}), we note that (\ref{eq:ac_ODEs}b) can be rewritten in the form
\begin{equation} \label{eq:ac_ODEb}
\frac{{\rm d}\tilde X_t}{{\rm d}t}  =   
- \tilde g_t(\tilde X_t)
-\frac{{\rm d}X_t}{{\rm d}t}(\tilde X_t)
 + \frac{1}{2}\Sigma(\tilde X_t) \nabla_x y_t (\tilde X_t) .
\end{equation}
In this context, it is worthwhile to consider the special case $G = R = I$, $\Sigma = 2I$, $b(x) = -\nabla_x U(x)$, and
$u_\theta(x) = -\nabla_x \Psi_\theta(x)$ in more detail. Here $U(x):\mathbb{R}^{d_x}\to \mathbb{R}$ 
and $\Psi_\theta(x): \mathbb{R}^{d_x} \to \mathbb{R}$ 
are given functions. Under these assumptions, the density $\pi_\theta$ is explicitly known and 
\begin{equation}
    \nabla_x \log \pi_\theta(x) = 
    -\nabla_x U(x) - \nabla_x \Psi_\theta(x).
\end{equation}
Furthermore, we may assume that (\ref{eq:ac_ODEs}a) is in equilibrium and we can set
\begin{equation}
\frac{{\rm d}X_t}{{\rm d}t} \equiv 0
\end{equation}
in (\ref{eq:ac_ODEb}). Let $\tilde g_t(x) = \nabla_x V_t(x)$ denote the solution of (\ref{eq:ac_Poisson}) for appropriate potential $V_t(x)$, then it follows from (\ref{eq:ac_ODEb}) and ${\rm d}\tilde X_t/{\rm d}t \approx 0$ that
\begin{equation}
\nabla_x y_t(x) \approx \nabla_x V_t(x).
\end{equation}
Hence,
\begin{equation}
    \pi_t \left[ (\nabla_\theta \mathcal{L}_{\theta_t})
y_t \right] + \pi_t \left[ \nabla_\theta
c_{\theta_t} \right] \approx
\pi_{\theta_t} \left[ 
\nabla_\theta (\nabla_x \Psi_{\theta_t})^{\rm T} 
\left(\nabla_x V_t + \nabla_x \Psi_{\theta_t}\right)
\right]
\end{equation}
and the optimal parameter choice, $\theta_\ast$, satisfies
\begin{equation} \label{eq:ac_c1}
    0 =
\pi_{\theta_\ast} \left[ 
\nabla_\theta (\nabla_x \Psi_{\theta_\ast})^{\rm T} 
\left( \nabla_x V_{\theta_\ast} + \nabla_x \Psi_{\theta_\ast}\right)
\right]
\end{equation}
subject to the potential $V_{\theta_\ast}(x)$ satisfying the Poisson equation
\begin{equation} \label{eq:ac_c2}
\nabla_x \cdot(\tilde \pi_{\theta_\ast} \nabla_x V_{\theta_\ast})
= - \tilde \pi_{\theta_\ast} \left(c_{\theta_\ast} + \|\nabla_x V_{\theta_\ast}\|^2 - \zeta_\ast\right)
\end{equation}
with $\tilde \pi_{\theta_\ast} \propto e^{-V_{\theta_\ast}} \pi_{\theta_\ast}$.
The approach proposed in this paper can now be viewed as providing a dynamic particle-based algorithm for solving the nonlinear equations (\ref{eq:ac_c1})-(\ref{eq:ac_c2}). It is also worth noting that (\ref{eq:ac_c2}) is equivalent to
\begin{equation}
    \mathcal{L}_{\theta_\ast} V_{\theta_\ast} =
    - c_{\theta_\ast} + \pi_{\theta_\ast}[c_{\theta_\ast}],
\end{equation}
which implies $\nabla_x y_{\theta_\ast} = \nabla_x V_{\theta_\ast}$ as desired.


\bibliographystyle{plainurl}
%
\bibliography{bib-database}
%

%
%


\end{document}